\newcommand{\RR}{{\mathbb R}}
\newcommand{\BB}{{\mathbb B}}
\newcommand{\cI}{{\mathcal{I}}}
\newcommand{\cD}{{\mathcal{D}}}
\newcommand{\cM}{{\mathcal{M}}}
\newcommand{\cG}{{\mathcal{G}}}
\newcommand{\cS}{{\mathcal{S}}}
\newcommand{\cU}{{\mathcal{U}}}
\newcommand{\cV}{{\mathcal{V}}}
\newtheorem{theorem}{Theorem}
\newtheorem{proposition}{Proposition}
\newtheorem{remark}{Remark}
\newtheorem{problem}{Problem}
\newtheorem{lemma}{Lemma}
\newtheorem{assumption}{Assumption}
\title{Guaranteed Reachability for Systems with Unknown Dynamics}
\author{Melkior Ornik\thanks{University of Illinois at Urbana-Champaign, Urbana, IL 61801, USA. \quad e-mail: mornik@illinois.edu}}
\date{}
\begin{document}

\maketitle

\begin{abstract}
The problem of computing the reachable set for a given system is a quintessential question in nonlinear control theory. While previous work has yielded a plethora of approximate and analytical methods for determining such a set, these methods naturally require the knowledge of the controlled system dynamics throughout the state space. In contrast to such classical methods, this paper considers the question of estimating the reachable set of a control system using only the knowledge of local system dynamics at a single point and a bound on the rate of change of dynamics. Namely, motivated by the need for safety-critical planning for systems with unknown dynamics, we consider the problem of describing the guaranteed reachability set: the set of all states that are guaranteed to be reachable regardless of the true system dynamics, given the current knowledge about the system. We show that such a set can be underapproximated by a reachable set of a related known system whose dynamics at every state depend on the velocity vectors that are guaranteed to all control systems consistent with the assumed knowledge. Complementing the theory, numerical examples of a single-dimensional control system and a simple model of an aircraft in distress verify that such an underapproximation is meaningful in practice, and may indeed equal the desired guaranteed reachability set.
\end{abstract}

\section{Introduction}
Following an adverse event affecting flight safety, such as partial loss of actuation, significant change in system dynamics due to damage, or sensor malfunction, a pilot operating an aircraft in distress needs to determine whether to divert to an alternate airport, and if so, which airport to choose. While it \textit{may} be possible for the aircraft to continue to its original destination, the pilot is required to determine a diversion airport where the aircraft can \textit{certainly} land \citep{EksPan03,DeS13}. At the time of decision, the pilot might not have a correct model of flight dynamics; in extreme cases such as loss of a wing \citep{Alo06}, there may be little prior experience or available knowledge about the system. Thus, it is imperative to immediately provide the pilot, or the flight controller for an autonomous vehicle, with a set of landing points that the aircraft is guaranteed to be able to reach, given the current information about the system.

Abstracted from its motivation, the above example describes the classical problem of \textit{reachability}: given the system's initial state, we wish to describe all states for which there exists a control input that drives the system to that state \citep{Bro76,Isi85,Lew12}. Such a problem has been explored in many related flavors, such as reachability at a given time \citep{Dinetal11}, reachability within a given time interval \citep{Mitetal05}, reachability with control constraints \citep{KurVar00}, and reachability with state constraints \citep{KurVar06}. While determining the reachable set of a known system --- and even the reduced problem of deciding whether the reachable set equals the entire state space --- was shown to be generally computationally infeasible \citep{Son88,Kaw90}, there has been substantial work on providing approximations of the reachable set and computing it with arbitrary precision using iterative methods \citep{Wol90,MitTom03,Altetal08, Xueetal17}.

The crucial difference between the above work and the focus of this paper, motivated by the example of an aircraft in sudden distress, is in our assumption of a significant lack of knowledge on system dynamics when computing the reachable set. While previous work has considered computation of reachable sets under uncertainties in dynamics such as those given by bounded unknown disturbances \citep{Mitetal05} or a finite number of uncertain parameters \citep{Altetal08}, the framework that we are considering contains substantially fewer information. Namely, motivated by the work of \cite{Ornetal17,Ornetal19} that determines \textit{local controlled dynamics} of a nonlinear system at a given state using solely the information from a single trajectory until the time that the system reached that state, we assume that the only available knowledge at the time of computing the reachable set consists of (i) local dynamics at a single point and (ii) Lipschitz bounds on the rate of change of system dynamics in the state space. As we do not know anything else about the system dynamics, we wish to determine the set of states that are guaranteed to be reachable from that single point \textit{regardless} of the true system dynamics, as long as they are consistent with the above knowledge.

The primary contribution of this paper is to provide an underapproximation of such a \textit{guaranteed reachability set (GRS)}. Our approach for determining this underapproximation relies on interpreting a control system as a differential inclusion with the \textit{available velocity set} at every state in the state space describing the possible tangents to the system's trajectory when leaving that state. Local dynamics at each state determine the available velocity set at that state, and, while exact available velocity sets are not known anywhere except for a single point, we can determine the family of all velocity sets that are consistent with our knowledge about the system dynamics. The intersection of the elements of such a family provides the \textit{guaranteed velocity set} at every state in the state space; a set of velocities for which we know there exists a control input generating them at a given state, even if we do not know the exact dynamics at that state. Since such a set may be difficult to compute or express in closed form, we underapproximate it by a ball of maximal possible radius. Moving back from differential inclusions to controlled differential to differential equations, such an underapproximation generates a control system with \textit{known} dynamics; the reachable set of such a control system is a subset of the GRS.

The outline of this paper is as follows. In Section \ref{motiv} we connect our work with previous efforts on reachability and estimation of dynamics, discuss the realism of the knowledge that we assume about the system, and provide a broad discussion of the paper's approach and results. Section \ref{probst} provides the formal problem statement and assumptions about the system dynamics and prior available knowledge. Section \ref{gvel} introduces the guaranteed velocity set, expresses it as an intersection of a family of simply computable sets, provides its underapproximation by a ball, and shows that such an underapproximation is maximal. Section \ref{reachse} describes an underapproximation of the GRS as a reachable set of a known control system, discusses the currently available methods of computing such a set, and proposes a strategy for determining, in real time, a control input to drive the system to a state guaranteed to be reachable. In Section \ref{exa} we provide two numerical examples to illustrate and validate the developed theory. Section \ref{exa1} considers a nonlinear 1D control system motivated by a phenomenon of gene autoregulation and discusses the relationship between the true reachable set, the guaranteed reachable set, and its previously described underapproximation; its results serve to both confirm the preliminary theory established in this paper and inform future avenues of research. Section \ref{exa2} builds upon the motivating example by discussing a simple model of an aircraft in need to divert. Finally, Section \ref{conc} discusses open questions for future work.

\textbf{Notation.} The set of all matrices with $n$ rows and $m$ columns is denoted by $\RR^{n\times m}$. Notation $\BB^m(x;r)$ denotes a closed ball in $\RR^m$ with the center at $x\in\RR^m$ and radius $r\geq 0$. Set $GL(n)$ denotes the general linear group of all matrices $\RR^{n\times n}$, i.e., all invertible $n\times n$ matrices. For a vector $v$, $\|v\|$ denotes its Euclidean norm. For a matrix $M$, $M^T$ denotes its transpose and $\|M\|$ denotes its $2$-norm: $\|M\|=\max_{\|v\|=1}\|Mv\|$. For a matrix $M$, vector $v$, and set $\cS$ of vectors of appropriate size, notation $v+M\cS$ denotes the set $\{v+Ms~|~s\in\cS\}$. Notation $\mathrm{diag}(\lambda_1,\ldots,\lambda_n)$ denotes a diagonal $n\times n$ matrix with elements $\lambda_1$, \ldots, $\lambda_n$ on the diagonal, in that order. Vector $e_i\in\RR^n$ denotes the $i$-th coordinate vector, i.e., a vector of all zeros, except a $1$ in position $i$. Matrix $I$ denotes the identity matrix.

\section{Motivation and Overview}
\label{motiv}

The assumptions and objectives of this work come from the context of online learning and control of a nonlinear system with unknown dynamics. Such a framework, described by \cite{Ornetal17,Ornetal19}, is motivated by the desire to successfully control a system with entirely unknown dynamics by learning as much as possible about the dynamics ``on the fly'', i.e., solely from the system's behavior during a single system run; it differs from classical work on adaptive or robust control \citep{IoaSun96,DulPag00} by not assuming almost any knowledge about the magnitude or the structure of uncertainty about the system dynamics, and from work on data-driven learning and control synthesis by not allowing collection of information on system dynamics by way of repeated system runs \citep{Bruetal16,Cheetal18}. 

Under some technical assumptions, the results of \cite{Ornetal17,Ornetal19} show that, for an unknown control-affine system $\dot{x}=f(x)+G(x)u$, the \textit{local dynamics} $u\mapsto f(x_0)+G(x_0)u$ can be obtained for any state $x_0$ lying on a system trajectory, with an arbitrarily small error and using only the knowledge of the trajectory prior to the time at which $x_0$ is visited, as well as the bounds on Lipschitz constants and magnitudes of functions $f$ and $G$. Thus, along with the assumption of perfect observations of system state at every time, the only ``outside'' knowledge required to establish these local dynamics are the bounds on Lipschitz constants and magnitudes of functions $f$ and $G$. These bounds are not required to be sharp, and may thus come from basic knowledge of the physical laws and the system's environment.

The work of \cite{Ornetal17,Ornetal19} considers a predetermined control objective, i.e., a target state that should be reached. The proposed approach is to design a utility function which, when maximized, appears to lead the system in a desired direction, given the learned local system dynamics. However, a control signal driving the system to a desired state may not exist and, even if it does, it may be impossible to learn online. Consequently, this paper aims to take a different approach. While it may be impossible to know whether a system can be driven to a desired state, the learned dynamics, coupled with the bounds on the rate of change of the underlying vector fields, naturally yield the two following sets:

\begin{enumerate}[(i)]
\item states that \textit{may} be possible to reach using admissible control signals \textit{(optimistic reachability set)},
\item states that are \textit{guaranteed to be} reachable using admissible control signals \textit{(guaranteed reachability set --- GRS)}.
\end{enumerate}

The optimistic reachability set and the GRS are clearly a superset and a subset, respectively, of the true reachable set. While computing over- and underapproximations of the reachable set has been the subject of substantial previous research \citep{KurVar00,MitTom03}, including the setting of computing the optimistic reachability set for systems with uncertain parameters \citep{Altetal08}, prior work has not --- to the best of our knowledge --- considered the framework of an unknown control system with knowledge of local dynamics at a single state.

Without discussing the reachable sets, the work of \cite{Ornetal17,Ornetal19} has implicitly concentrated on optimistic reachability, i.e., attempting to reach the original objective while there appears to be any chance of reaching it. The contribution of this paper is to consider the latter option, that is, to provide a set of states that --- using the knowledge of local dynamics at the system's intital state and Lipschitz constants of the underlying vector fields --- are \textit{certifiably reachable} using an admissible control signal. While the motivation for our work stems from \cite{Ornetal17,Ornetal19}, our contribution is entirely standalone: we do not concern ourselves with how local dynamics were found, but assume that they were obtained, and seek to exploit them to find a reachable set of states.

Motivated by considerations similar to the ones of this paper, the work of \cite{Ahmetal17} follows the same broad idea of guaranteed reachability, deeming an unknown dynamical system ``safe'' if it is guaranteed that its trajectory will not enter a particular unsafe set. However, unlike our paper, the dynamical system in that work has no control input. Hence, the reachable set is solely a single trajectory; the focus of \cite{Ahmetal17} is thus on certifying the safety of a dynamical system. Our results focus on finding a certifiably reachable set for a control system; applied to the narrative of \cite{Ahmetal17}, our paper provides a certificate of existence of a control input that renders the resulting dynamical system safe.

The key idea of this paper is the following: if dynamics $u\mapsto f(x_0)+G(x_0)u$ are known at an initial state $x_0$ for the set of admissible controls $\cU$, then the set of system velocities $\cV_{x_0}$ available at $x_0$ is also known: $\cV_{x_0}=\{f(x_0)+G(x_0)u~|~u\in\cU\}=f(x_0)+G(x_0)\cU$. While $\cV_x$ may not be known for any other $x$, $x\mapsto\cV_x$ is a Lipschitz continuous function under some appropriate metric, given the Lipschitz bounds on $f$ and $g$. In other words, while we may not know $\cV_x$, we can compute the bound on the difference between $\cV_{x}$ and $\cV_{x_0}$ dependent on the distance $\|x_0-x\|$ and Lipschitz bounds on $f$ and $g$. Consequently, we are able to determine the set of \textit{guaranteed velocities} $\cV^\cG_{x}$, i.e., the intersection of all sets that can possibly equal $\cV_x$. Hence, regardless of the true system dynamics --- as long as they are consistent with our prior knowledge about the dynamics --- the set of velocities $\cV^\cG_x$ will be available at $x$. Such a set will satisfy $\cV^\cG_{x}\to\cV_{x_0}$ as $x\to x_0$, with some appropriate notion of convergence.

As $\cV^\cG_{x}\subseteq\cV_{x}$ for all $x$, any trajectory that satisfies the differential inclusion $\dot{x}\in\cV^\cG_x$ for all $x$ will be a trajectory of $\dot{x}=f(x)+G(x)u$ produced by some admissible control signal; an analogous claim holds for all other dynamics $\dot{x}=\hat f(x)+\hat{G}(x)u$ consistent with the assumed knowledge. Hence, the reachable set of the differential inclusion $\dot{x}\in\cV^\cG_x$, with $x(0)=x_0$, is a subset of the \textit{guaranteed reachablility set} --- the intersection of reachable sets for all control systems consistent with the assumed knowledge. As the reachable set of $\dot{x}\in\cV^\cG_x$ may be difficult to characterize analytically, we will also provide a method of its underapproximation by a reachability set of a simpler control system, based on underapproximating each set $\cV^\cG_x$ by a ball of maximal radius.

The remainder of this paper serves to describe, formalize, and quantify the notions outlined in the above paragraphs. We begin by posing a formal problem statement.

\section{Problem Statement}
\label{probst}

Throughout the paper, we consider a control system $\cM(f,G)$ given by the following control-affine system dynamics: 
\begin{equation}
\label{consys}
\begin{split}
& \dot{x}(t)=f(x(t))+G(x(t))u(t)\textrm{,} \\
& x(0)=x_0\textrm{,}
\end{split}
\end{equation}
where $t\geq 0$, $x(t)\in\RR^n$ for all $t$, admissible control inputs $u(t)$ lie in the set $\cU\subseteq\RR^m$, and functions $f:\RR^n\to\RR^n$ and $G:\RR^n\to\RR^{n\times n}$ are globally Lipschitz-continuous, i.e., there exist $L_f\geq 0$ and $L_G\geq 0$ such that $\|f(x)-f(y)\|\leq L_f\|x-y\|$ and $\|G(x)-G(y)\|\leq L_G\|x-y\|$ for all $x,y\in\RR^n$. While the framework of this paper, and many of its results, could be analogously applied to control systems on more general state spaces, we consider the Euclidean state space for ease of exposition.

Without loss of generality, we assume $x_0=0$; the system state can be shifted by $-x_0$ to ensure such a property. For technical reasons we also make the following assumption.

\vskip 5pt

\begin{assumption}
\label{ass}
The system described by \eqref{consys} is fully actuated at $x_0=0$, i.e., $m=n$ and $G(0)\in GL(n)$. Set $\cU$ satisfies $\cU=\BB^n(0;1)$.
\end{assumption}

The assumption of full actuation makes estimation of available system velocities at points $x$ around $x_0=0$ significantly simpler, as the estimates will depend on $\|G(0)^{-1}\|$. We will briefly discuss the case of a non-invertible $G(0)$ later in the paper: such an estimation can still be performed by using the reciprocal of the smallest non-zero singular value of matrix $G(0)$ instead of $\|G(0)^{-1}\|$. The assumption of $\cU=\BB^n(0;1)$ parallels an assumption of \cite{Ornetal17,Ornetal19} that $\cU=[-1,1]^m$; in our motivating example, the inputs available to the aircraft are clearly bounded, and taking $\cU=\BB^n(0;1)$ renders technical work significantly easier.

Building upon the assumptions and results of \cite{Ornetal17,Ornetal19}, we place the following limitations on our knowledge about the system dynamics: we assume that bounds $L_f$ and $L_G$ are known, along with values $f(0)$ and $G(0)$. We assume that nothing else is known about functions $f$ and $G$. 

\vskip 5pt

\begin{remark}
It can be trivially shown that knowing the values $f(0)$ and $G(0)$ is equivalent to knowing the local system dynamics $u\mapsto f(0)+G(0)u$.
\end{remark}

We will denote the set of all functions $(\hat{f},\hat{G})$ consistent with the above knowledge by $\cD_{con}$, by defining
\begin{equation*}
\begin{split}
\cD_{con} =\{(\hat{f},\hat{G})~|~\hat{f}(0)=f(0)\textrm{, }\hat{G}(0)=G(0)\textrm{, } \\ L_f \textrm{ is a Lipschitz bound for }\hat{f}\textrm{, } \\ L_G \textrm{ is a Lipschitz bound for }\hat{G}\}\textrm{.}
\end{split}
\end{equation*}

Our goal is to describe the set of states that are reachable from $x_0=0$, \textit{regardless} of the true dynamics of system \eqref{consys}, as long as they are consistent with the assumed knowledge. Computing this set, or its underapproximation, serves to enable the planner to determine an objective that is guaranteed to be reachable regardless of the unknown system dynamics. 

We define the \textit{(forward) reachable set} $R^{\hat f,\hat G}(T;x_0)$ as the set of all states reachable by a system $\cM(\hat f,\hat G)$ from $x_0$ at time $T$ using some control signal $u:[0,T]\to\cU$. In other words, if $\phi^{\hat f,\hat G}_u(T;x_0)$ denotes a controlled trajectory of system $\cM(\hat f,\hat G)$ with control signal $u$, 
\begin{equation*}
R^{\hat f,\hat G}(T,x_0)=\{\phi^{\hat f,\hat G}_u(T;x_0)~|~u:[0,T]\to\cU\}\textrm{.}
\end{equation*}
Throughout the paper, we assume that the existence and uniqueness of all trajectories $\phi_u(T;x_0)$ are guaranteed. We can now state the following central problem of this paper.

\vskip 5pt

\begin{problem}
\label{pro}
Let $T\geq 0$. Describe the \emph{guaranteed reachability set (GRS)} defined by
\begin{equation}
\label{grs}
R^\cG(T,0)=\bigcap_{(\hat{f},\hat{G})\in\cD_{con}} R^{\hat f,\hat G}(T,0)\textrm{.}
\end{equation}
\end{problem}

While we pose Problem~\ref{pro} in terms of finding the GRS at time $T$, we could analogously ask for the set of guaranteed reachability before time $T$, i.e., $\cup_{t\in[0,T]} R^\cG(t,0)$, and the set of eventual reachability $\cup_{t\in[0,+\infty)} R^\cG(t,0)$. Much of the theory presented in subsequent sections of this paper can be adapted to these two sets.

In our running example of an aircraft in distress, Problem \ref{pro} is only the first of two steps towards managing the crisis: after determining which airport to land at, the pilot needs to determine in real time \textit{how} to land at that airport. The interest of this paper is primarily in solving Problem~\ref{pro}. However, Section~\ref{reachse} also provides a method of determining, under technical assumptions, a control input to reach the desired state $x_{end}$ if $x_{end}$ is certified to be reachable. As such a control input will depend on system dynamics, the proposed method exploits previously mentioned work in \cite{Ornetal17,Ornetal19} on real-time estimation of local system dynamics.

Naturally, in order for Problem \ref{pro} to be meaningful, we wish to describe $R^\cG(T,0)$ as simply as possible. The primary contribution of this paper is to provide an underapproximation of $R^\cG(T,0)$ as a reachable set of the control system $\dot{x}=a+g(\|x\|)u$, where $g:[0,+\infty)\to[0,+\infty)$ is a ramp function. We begin with a discussion of a relationship of controlled system dynamics given by an ordinary differential equation and its induced differential inclusion.

\section{Guaranteed Velocities}
\label{gvel}

Ordinary differential equations with control inputs have been interpreted as ordinary differential inclusions in numerous previous works; see, e.g., \cite{Cla83,KurVar00,BrePic07}. Following the same approach, we define a differential inclusion
\begin{equation}
\label{incsys}
\begin{split}
\dot{x}\in \cV_x & =f(x)+G(x)\cU\textrm{,} \\
& x(0)=x_0\textrm{.}
\end{split}
\end{equation}
where in the future we denote $\cV_x=f(x)+G(x)\cU$. Clearly, any solution $\phi_u(\cdot;x_0)$ to the differential equation \eqref{consys} also satisfies \eqref{incsys}, while any trajectory that satisfies \eqref{incsys} at all times $t$ by definition satisfies \eqref{consys}. Let us define the reachable set of a differential inclusion \eqref{incsys} as a set of all possible values, at a given time, of trajectories that satisfy \eqref{incsys}. Then, the reachable set of \eqref{incsys} is naturally $R(T,x_0)$.

Given our assumptions about the available knowledge of the system, set $\cV_{x_0}=\cV_0$ is assumed to be entirely known, but no other set $\cV_x$ is known. Our goal in this section is to provide an underapproximation for the sets $\cV_x$ using solely the available knowledge of $\cV_0$, as well as Lipschitz constants $L_f$ and $L_G$.

Set $\cV_x$ describes all the available velocities for a control system that satisfies \eqref{consys}, i.e., \eqref{incsys}, at a time $t$ when its state is $\phi(t;x_0)=x$. If $f(x)$ and $G(x)$ were known, computing such a set would be simple. However, by our assumptions, we do not know their exact values, but do know that $(f,G)\in\cD_{con}$. Then, analogously to the GRS, we can define the \textit{guaranteed velocity set}, i.e., the set of all velocities that will certainly be available at $x$, regardless of which element in $\cD_{con}$ represents the true system dynamics. Such a set is given by 
\begin{equation}
\label{vgx}
\cV^{\cG}_x=\bigcap_{(\hat{f},\hat{G})\in\cD_{con}}\hat{f}(x)+\hat{G}(x)\cU\subseteq \cV_x\textrm{.}
\end{equation}

Let us first relate the reachable set of the differential inclusion 
\begin{equation}
\label{vdifinc}
\begin{split}
\dot{x}\in\cV^{\cG}_x\textrm{,} \\
x(0)=0\textrm{,}
\end{split}
\end{equation}
to $R^\cG(T,0)$.

\vskip 5pt
\begin{proposition}
\label{subsv}
Let $T\geq 0$. If a function $\phi:[0,+\infty)\to\RR^n$ satisfies \eqref{vdifinc} at all times $t\leq T$, then $\phi(T)\in R^\cG(T,0)$.
\end{proposition}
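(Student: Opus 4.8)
The plan is to show, for an arbitrary fixed pair $(\hat f,\hat G)\in\cD_{con}$, that $\phi$ is itself a controlled trajectory of the system $\cM(\hat f,\hat G)$ under a suitable admissible control, so that $\phi(T)\in R^{\hat f,\hat G}(T,0)$. Since the pair is arbitrary, intersecting over all of $\cD_{con}$ then yields $\phi(T)\in R^\cG(T,0)$ directly from the definition \eqref{grs}.

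First I would record the one essential set inclusion. By the definition \eqref{vgx} of $\cV^\cG_x$ as an intersection over $\cD_{con}$, for every $(\hat f,\hat G)\in\cD_{con}$ and every $x$ we have $\cV^\cG_x\subseteq\hat f(x)+\hat G(x)\cU$. Consequently, any $\phi$ satisfying \eqref{vdifinc} at all times $t\leq T$ also satisfies the differential inclusion $\dot\phi(t)\in\hat f(\phi(t))+\hat G(\phi(t))\cU$ for almost every $t\leq T$, with $\phi(0)=0$.

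The crux is to convert this differential inclusion back into a controlled differential equation of the form \eqref{consys}, i.e., to produce an admissible control $u:[0,T]\to\cU$ with $\dot\phi(t)=\hat f(\phi(t))+\hat G(\phi(t))u(t)$ almost everywhere. Pointwise existence of such a $u(t)\in\cU$ is immediate from the inclusion, but one cannot simply set $u(t)=\hat G(\phi(t))^{-1}(\dot\phi(t)-\hat f(\phi(t)))$, since $\hat G(x)$ need not be invertible away from $x_0=0$. The step I expect to be the main obstacle is therefore establishing that the pointwise selection can be made \emph{measurable} in $t$. I would invoke a measurable selection result --- Filippov's implicit function lemma --- applied to the map $(t,u)\mapsto\hat f(\phi(t))+\hat G(\phi(t))u-\dot\phi(t)$, which is measurable in $t$ and continuous (indeed affine) in $u$, with $\cU=\BB^n(0;1)$ compact. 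This furnishes a measurable $u:[0,T]\to\cU$ realizing the required equality for almost every $t$.

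Finally, with this control in hand, $\phi$ solves \eqref{consys} for $\cM(\hat f,\hat G)$ with $\phi(0)=0$, so by the assumed existence and uniqueness of trajectories we have $\phi=\phi^{\hat f,\hat G}_u(\cdot;0)$, and thus $\phi(T)=\phi^{\hat f,\hat G}_u(T;0)\in R^{\hat f,\hat G}(T,0)$. Since $(\hat f,\hat G)\in\cD_{con}$ was arbitrary, $\phi(T)$ lies in the intersection \eqref{grs}, that is, $\phi(T)\in R^\cG(T,0)$.
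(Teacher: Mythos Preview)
Your proposal is correct and follows essentially the same route as the paper: fix an arbitrary $(\hat f,\hat G)\in\cD_{con}$, use the inclusion $\cV^\cG_x\subseteq\hat f(x)+\hat G(x)\cU$ to conclude $\phi$ is a trajectory of $\cM(\hat f,\hat G)$, hence $\phi(T)\in R^{\hat f,\hat G}(T,0)$, and intersect. The paper handles the passage from the differential inclusion back to a controlled equation by simply invoking the equivalence stated under \eqref{incsys}, whereas you make this step explicit via Filippov's measurable selection lemma; your version is thus a more rigorous rendering of the same argument.
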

\begin{proof}
If $d\phi(t)/dt\in\cV^{\cG}_{\phi(t)}$, then $\phi$ satisfies $\dot{x}\in \hat{f}(x)+\hat{G}(x)\cU$ for all $(\hat{f},\hat{G})\in\cD_{con}$. Thus, by a discussion analogous to that under \eqref{incsys}, $\phi(T)\in R^{\hat{f},\hat{G}}(T,0)$ for all $(\hat{f},\hat{G})\in\cD_{con}$. By \eqref{grs}, $\phi(T)\in R^\cG(T,0)$.
\end{proof}

We note that set $\cV^\cG_x$ may be empty for certain $x$. Such a property does not disable us from discussing the reachable set of \eqref{vdifinc}; if a trajectory reaches a state $x$ where $\cV^\cG_x=\emptyset$, we will consider by convention that it ceases to exist at that time.

\begin{remark}
Proposition~\ref{subsv} states that the reachable set of \eqref{vdifinc} may be used as an underapproximation of $R^\cG(T,0)$. We do not claim that those sets are in general equal: the intersection of reachable sets of $\dot{x}\in F_i(x)$ may not equal the reachable set of $\dot{x}\in \cap_i F_i(x)$. To illustrate this fact, consider $F_1(x)=\{k[1 \textrm{ } 2]^T~|~k\in[0,1]\}$ and $F_2(x)=\{k[1\textrm{ } x_2]^T~|~k\in[0,1]\}$ with $x(0)=(0,1)$. Then, $\dot{x}\in F_1\cap F_2$ gives $\dot{x}=0$ at $x(0)$, i.e., the reachable set of $\dot{x}\in F_1\cap F_2$ at any time is just $\{x(0)\}$. On the other hand, the reachable set of $\dot{x}\in F_1(x)$ at time $T$ includes the state $(T,2T+1)$, and the reachable set of $\dot{x}\in F_2(x)$ at time $T$ includes the state $(T,e^T)$. Thus, at time $T$ that satisfies $e^T=2T+1$, the intersection of reachable sets of $\dot{x}\in F_1(x)$ and $\dot{x}\in F_2(x)$ contains at least one state other than $\{x_0\}$.

While this paper contains no theoretical discussion of equality between the reachable set of \eqref{vdifinc} and $R^\cG(T,0)$, the example in Section \ref{exa1} shows that the two sets may in fact be equal --- the above counterexample may not be applicable as $R^\cG(T,0)$ is not an intersection of any general reachable sets, but of reachable sets $R^{\hat{f},\hat{G}}(T,0)$, where $\hat{f}$ and $\hat{G}$ satisfy the additional structure $(\hat{f},\hat{G})\in\cD_{con}$.
\end{remark}

Motivated by the result of Proposition~\ref{subsv}, we seek to underapproximate $R^\cG(T,0)$ by considering the reachable set of \eqref{vdifinc}. To do that, let us first discuss the geometric properties of $\cV^{\cG}_x$. For a given $x\in\RR^n$ it is simple to show that 
\begin{equation}
\label{dcon2}
\{(\hat{f}(x),\hat{G}(x))~|~ (\hat{f},\hat{G})\in\cD_{con}\}= \BB^n (f(0);L_f\|x\|)\times\BB^n (G(0);L_G\|x\|)\textrm{.}
\end{equation}
Consequently, since $\cU=\BB^n(0;1)$ and, for a matrix $M$, $M\BB^n(0;1)$ is an ellipsoid \citep{Bar02}, $\cV^\cG_x$ is an intersection of a parametrized, infinite set of ellipsoids. Unfortunately, such an intersection may be difficult to express in a simple form; as noted by \cite{KurVar06b}, an intersection of any number of ellipsoids is generally not an ellipsoid. In particular, Figure~\ref{intell} shows that, while easy characterization may be possible in special cases such as when $G(0)=I$, $\cV^{\cG}_x$ cannot be expected to be an ellipsoid.

\begin{figure}[ht]
\centering
\includegraphics[width=0.8\textwidth]{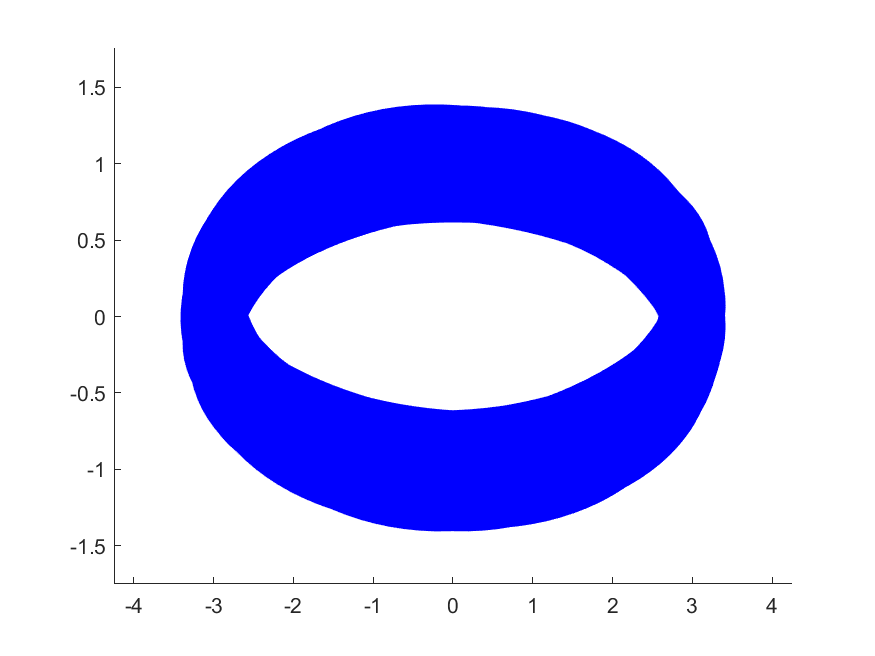}
\caption{Guaranteed velocity set $\cV^\cG_x$, with $x=(1,0)$, $L_f=0.1$, $L_G=0.3$, $f(0)=[0\textrm{ }0]^T$, and $G(0)=\mathrm{diag}(3,1)$, is given as the area in white surrounded by the blue area. The blue area and $\cV^\cG_{x}$ together provide $\cup_{(\hat{f},\hat{G})\in\cD_{con}}\hat{f}(x)+\hat{G}(x)\cU$, i.e., the \textit{optimistic} set of velocities that are available for at least one control system $\cM(\hat{f},\hat{G})$. In line with the difficulty of analytically computing them, these sets, and sets in most subsequent figures, have been approximated by Monte Carlo methods.}
\label{intell}
\end{figure} 

Our next step is thus to provide a simply computable underapproximation of $\cV^\cG_x$. Indeed, as set $\cD_{con}$ is a Cartesian product of balls and $\cU$ is a ball, set $\cV^\cG_x$ can be underapproximated in a geometrically appealing fashion. We derive such an approximation using the following results.

\vskip 5pt

\begin{lemma}
\label{lem1}
Let $a\in\RR^n$, $B\in GL(n)$, and $\cU=\BB^n(0;1)$. Then the following holds: 

\begin{enumerate}[(i)]
\item $a+B\cU\supseteq \BB^{n}(a;\|B^{-1}\|^{-1})$,
\item $\BB^{n}(a;\|B^{-1}\|^{-1})$ is a ball of maximal radius contained in $a+B\cU$.
\end{enumerate}
\end{lemma}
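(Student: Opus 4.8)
The plan is to reduce to the case $a=0$ (translating the whole configuration by $-a$ changes nothing, since $\BB^n(a;r)=a+\BB^n(0;r)$ and $a+B\cU=a+(B\cU)$), and then to exploit the fact that $a+B\cU$ is an ellipsoid whose semi-axes are the singular values of $B$. The single observation that makes everything fall into place is that $\|B^{-1}\|^{-1}$ equals the smallest singular value $\sigma_{\min}(B)$ of $B$: since the singular values of $B^{-1}$ are the reciprocals of those of $B$, we have $\|B^{-1}\|=\sigma_{\max}(B^{-1})=\sigma_{\min}(B)^{-1}$. Thus the claimed radius is precisely the smallest semi-axis of the ellipsoid, which is geometrically exactly what one expects the inradius to be.

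For part (i) I would argue directly. After the reduction we must show $B\cU\supseteq\BB^n(0;\|B^{-1}\|^{-1})$. Take any $v\in\BB^n(0;\|B^{-1}\|^{-1})$ and set $u=B^{-1}v$, which is well defined as $B\in GL(n)$. Submultiplicativity of the operator norm gives $\|u\|=\|B^{-1}v\|\leq\|B^{-1}\|\,\|v\|\leq\|B^{-1}\|\cdot\|B^{-1}\|^{-1}=1$, so $u\in\cU$ and hence $v=Bu\in B\cU$. This is the short, routine half.

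For part (ii) --- maximality --- the idea is a supporting-slab (width) argument, which has the advantage of handling balls with an arbitrary center, not merely those centered at $0$. Write a singular value decomposition $B=U\Sigma V^T$ with $U,V$ orthogonal and $\Sigma=\mathrm{diag}(\sigma_1,\ldots,\sigma_n)$, $\sigma_1\geq\cdots\geq\sigma_n=\sigma_{\min}(B)$. Let $w=Ue_n$, a unit vector. For any $z=Bu\in B\cU$ one computes $w^Tz=e_n^T\Sigma V^Tu=\sigma_n(V^Tu)_n$, and since $\|V^Tu\|=\|u\|\leq 1$ this yields $|w^Tz|\leq\sigma_n$. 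Hence the entire ellipsoid $B\cU$ is contained in the slab $\{z~|~|w^Tz|\leq\sigma_n\}$ of width $2\sigma_n$. Any ball $\BB^n(c;r)\subseteq B\cU$ must fit inside this slab, and the ball's extent along $w$ is the interval $[w^Tc-r,\,w^Tc+r]$ of length $2r$; therefore $2r\leq 2\sigma_n$, i.e.\ $r\leq\sigma_n=\|B^{-1}\|^{-1}$. Combined with part (i), which exhibits a ball of exactly this radius, this proves maximality.

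I expect the only genuine obstacle to be packaging part (ii) cleanly: one must recognize that the binding constraint is the width of the ellipsoid in the direction of its shortest axis (equivalently, the left singular vector $Ue_n$), and that the right quantity to compare against is $\|B^{-1}\|^{-1}=\sigma_{\min}(B)$ rather than $\|B\|$ or some average of the singular values. Everything else reduces to the submultiplicative bound and the elementary identity $\|V^Tu\|=\|u\|$ for orthogonal $V$. A reader preferring to avoid the SVD could instead observe that $\min_{\|u\|=1}\|Bu\|=\sigma_{\min}(B)=\|B^{-1}\|^{-1}$ directly and locate a boundary point of $a+B\cU$ at distance $\|B^{-1}\|^{-1}$ from $a$, which establishes maximality among balls centered at $a$; the slab argument above is what upgrades this to maximality over all centers.
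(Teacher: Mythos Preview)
Your proof is correct and follows essentially the same approach as the paper: part (i) is identical (set $u=B^{-1}v$ and use submultiplicativity), and part (ii) in both cases is a slab/width argument along the shortest-axis direction identified via singular values. The only cosmetic difference is that the paper performs an orthogonal change of coordinates to axis-align the ellipsoid and then looks at the $e_n$ coordinate, whereas you keep the original coordinates and project onto $w=Ue_n$; these are the same computation.
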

\begin{proof}
For (i), it clearly suffices to prove that, for every vector $\delta$, $\|\delta\|\leq \|B^{-1}\|^{-1}$, there exists $u\in\RR^n$, $\|u\|\leq 1$, such that
\begin{equation}
\label{aux1}
a+\delta=a+Bu\textrm{.}
\end{equation}
Since $B$ is invertible, setting $u=B^{-1}\delta$ satisfies \eqref{aux1}. Additionally, $\|u\|=\|B^{-1}\delta\|\leq\|\delta\|\|B^{-1}\|$ by the definition of the matrix $2$-norm. Since $\|\delta\|\leq \|B^{-1}\|^{-1}$, we have $\|u\|\leq 1$ as desired.

For (ii), we use a standard description of the ellipsoid $a+B\cU$ using singular values of $B$. In order to not drift from our paper's central narrative, we omit a detailed discussion of singular values and direct the reader to \cite{SirHor12,BenGre03}. Combining the results in the above works, we note that the length of shortest principal semi-axes of the ellipsoid $a+B\cU$ equals the smallest singular value of $B$, which equals exactly $\|B^{-1}\|^{-1}$. Thus, through a change of coordinates, we can assume without loss of generality that $a+B\cU$ is given by $\{x~|~\alpha_1x_1^2+\alpha_2x_1^2+\ldots+\alpha_nx_n^2\leq 1\}$, where $0<\alpha_1\leq\alpha_2\leq\cdots\leq\alpha_n=\|B^{-1}\|^2$. 

Consider now a ball of radius $\delta>\|B^{-1}\|^{-1}$ around any point $x'=(x'_1,\ldots,x'_n)\in\RR^n$. Then, at least one of the following inequalities hold: $x'_n+\delta>\|B^{-1}\|^{-1}$ or $x'_n-\delta<-\|B^{-1}\|^{-1}$. Hence, $\BB^n(x';\delta)\ni x'+\delta e_n\not\in a+B\cU$ or $\BB^n(x';\delta)\ni x'-\delta e_n\not\in a+B\cU$.
\end{proof}

Lemma~\ref{lem1} provides the maximal underapproximation of $\cV_x$ by a ball $\BB^n(f(x);\|G(x)^{-1}\|^{-1})$, assuming that $G(x)$ is invertible;  Figure~\ref{illem1} provides an illustration. Hence, by \eqref{vgx} and \eqref{dcon2}, $\cV^\cG_x$ can be underapproximated by 
\begin{equation}
\label{add1}
\bigcap_{\substack{\hat{a}\in\BB^n(f(0);L_f\|x\|) \\
\hat{B}\in\BB^{n\times n}(G(0);L_G\|x\|)}} \BB^n(\hat{a};\|\hat{B}^{-1}\|^{-1})\textrm{.}
\end{equation} The next two results build on Lemma~\ref{lem1} to provide a simpler approximation of $\cV^\cG_x$.

\begin{figure}[ht]
\centering
\includegraphics[width=0.8\textwidth]{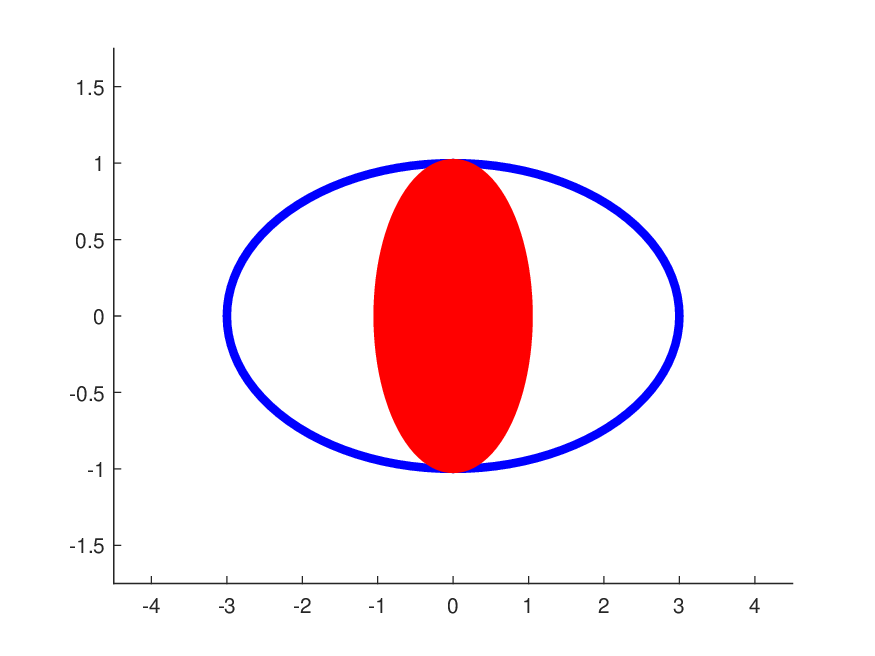}
\caption{Set $f(0)+G(0)\cU$, bounded in blue, and its underapproximation $\BB^n(f(0);\|G(0)^{-1}\|^{-1})$, drawn in red, for $f(0)=[0\textrm{ }0]^T$ and $G(0)=\mathrm{diag}(3,1)$. The red circle does not appear like a circle in order to keep the aspect ratio the same across all figures in this section, and all the illustrations legible.}
\label{illem1}
\end{figure} 

\vskip 5pt

\begin{lemma}
\label{lem2}
Let $a\in\RR^n$, $B\in GL(n)$, $0\leq r<\|B^{-1}\|^{-1}$, and $\cU=\BB^n(0;1)$. Define $\varepsilon=\min_{\hat{B}\in\BB^{n\times n}(B;r)}\|\hat{B}^{-1}\|^{-1}$. Then $$\bigcap_{\hat{B}\in\BB^{n\times n}(B;r)}a+\hat{B}\cU\supseteq \BB^{n}\left(a;\varepsilon\right)\textrm{.}$$
\end{lemma}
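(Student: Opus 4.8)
The plan is to reduce the claim to a pointwise application of Lemma~\ref{lem1}(i), so that almost all of the content lies in checking that $\varepsilon$ is a well-defined positive quantity. First I would establish that the hypothesis $r<\|B^{-1}\|^{-1}$ forces every $\hat B\in\BB^{n\times n}(B;r)$ to be invertible, which is exactly the condition under which Lemma~\ref{lem1}(i) can be invoked for each such $\hat B$.

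The hard part --- really the only part requiring thought --- is this invertibility together with the attainment of the minimum defining $\varepsilon$. Recalling from the proof of Lemma~\ref{lem1}(ii) that $\|B^{-1}\|^{-1}$ is the smallest singular value of $B$, I would bound, for any unit vector $v$ and any $\hat B$ with $\|\hat B-B\|\leq r$,
$$\|\hat Bv\|\geq\|Bv\|-\|(\hat B-B)v\|\geq\|B^{-1}\|^{-1}-r>0\textrm{.}$$
This shows $\hat B$ is injective, hence invertible, with smallest singular value $\|\hat B^{-1}\|^{-1}\geq\|B^{-1}\|^{-1}-r>0$. Since $\BB^{n\times n}(B;r)$ is compact and now lies entirely in $GL(n)$, on which $\hat B\mapsto\|\hat B^{-1}\|^{-1}$ is continuous, the minimum defining $\varepsilon$ is attained and satisfies $\varepsilon>0$.

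The conclusion then follows in a single line. Fixing any $\hat B\in\BB^{n\times n}(B;r)$, Lemma~\ref{lem1}(i) gives $a+\hat B\cU\supseteq\BB^n(a;\|\hat B^{-1}\|^{-1})$; since $\varepsilon\leq\|\hat B^{-1}\|^{-1}$ by definition of the minimum, monotonicity of balls in their radius yields $\BB^n(a;\varepsilon)\subseteq a+\hat B\cU$. As this holds for every $\hat B$ in the family, $\BB^n(a;\varepsilon)$ is contained in each member and hence in their intersection, which is precisely the desired inclusion. I expect no genuine difficulty beyond the invertibility/positivity step, as everything after it is a routine combination of Lemma~\ref{lem1}(i) with the fact that a ball of smaller radius sits inside one of larger radius about the same center.
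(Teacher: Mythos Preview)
Your proposal is correct and follows essentially the same approach as the paper: establish that every $\hat B$ in the ball is invertible, then apply Lemma~\ref{lem1}(i) together with the definition of $\varepsilon$ as the minimum. The only difference is cosmetic --- the paper cites the Eckart--Young--Mirsky theorem for invertibility, whereas you give a direct singular-value bound and additionally justify attainment of the minimum via compactness and continuity, which the paper leaves implicit.
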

\begin{proof}
Assume first that all matrices $\hat{B}\in\BB^{n\times n}(B;r)$ are invertible. For all such matrices $\hat{B}$, the inequality $\|\hat{B}^{-1}\|^{-1}\geq\min_{\hat{B}\in\BB^{n\times n}(B;r)}\|\hat{B}^{-1}\|^{-1}=\varepsilon$ obviously holds, so the claim holds by Lemma~\ref{lem1}. 

It thus remains to note that all matrices $\hat{B}\in\BB^{n\times n}(B;r)$ are indeed invertible. Such a property follows directly from the Eckart-Young-Mirski theorem; we again omit the details and point the reader to \cite{BenGre03} and \cite{Bjo14} for a longer discussion.
\end{proof}

Lemma~\ref{lem2} dealt with the intersection of $a+\hat{B}\cU$ for different $\hat{B}$, while vector $a$ was fixed; Figure~\ref{illem2} provides an illustration. Let us now consider a varying vector $a$.

\begin{figure}[ht]
\centering
\includegraphics[width=0.8\textwidth]{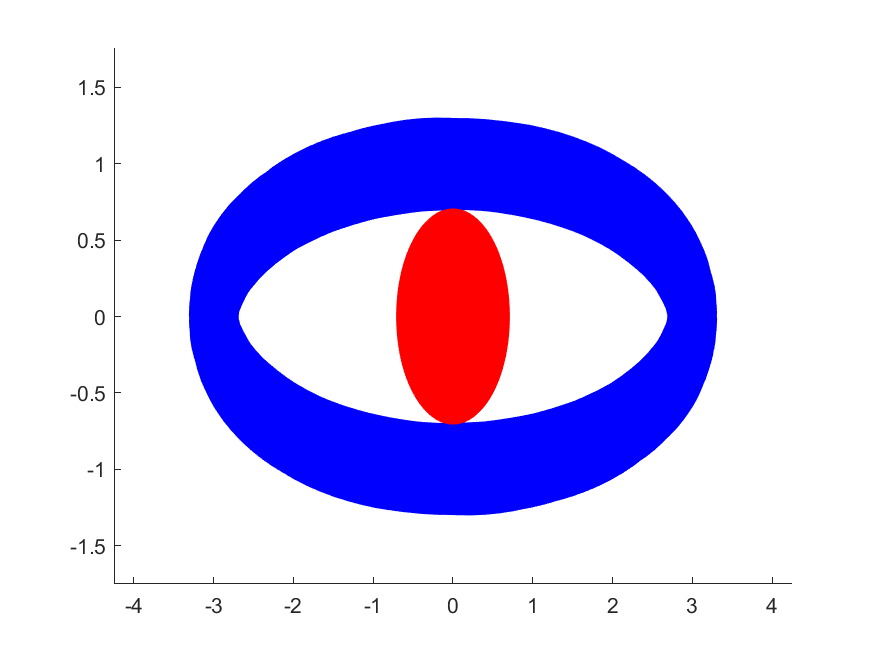}
\caption{Intersection of sets $f(0)+\hat{B}\cU$, $\hat{B}\in\BB^{n\times n}(G(0);L_G\|x\|)$, with $x=(1,0)$, $L_G=0.3$, $f(0)=[0\textrm{ }0]^T$, and $G(0)=\mathrm{diag}(3,1)$, is given as the area in white surrounded by the blue area. Its underapproximation $\BB^n(f(0);\min_{\hat{B}\in\BB^{n\times n}(G(0);L_G\|x\|)}\|\hat{B}^{-1}\|^{-1})=\BB^n(0;0.7)$ is drawn in red.}
\label{illem2}
\end{figure} 

\vskip 5pt

\begin{lemma}
\label{lem3}
Let $a\in\RR^n$, $r\geq 0$, and $R\geq r$. Then $$\bigcap_{\hat{a}\in\BB^{n}(a;r)}\BB^{n}(\hat{a};R)=\BB^{n}(a;R-r)\textrm{.}$$
\end{lemma}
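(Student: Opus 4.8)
The plan is to reduce to the case $a=0$ by translation and then establish the two inclusions separately, each through an elementary use of the triangle inequality. Since the map $z\mapsto z-a$ is an isometry that sends $\BB^n(\hat a;R)$ to $\BB^n(\hat a-a;R)$, and carries $\hat a$ ranging over $\BB^n(a;r)$ to $\hat a-a$ ranging over $\BB^n(0;r)$, the claim is equivalent to showing $\bigcap_{\|\hat a\|\leq r}\BB^n(\hat a;R)=\BB^n(0;R-r)$, so I assume $a=0$ throughout.

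For the inclusion $\supseteq$, I would take any $y$ with $\|y\|\leq R-r$ and any center $\hat a$ with $\|\hat a\|\leq r$, and estimate $\|y-\hat a\|\leq\|y\|+\|\hat a\|\leq(R-r)+r=R$. Hence $y\in\BB^n(\hat a;R)$ for every admissible $\hat a$, so $y$ lies in the intersection. Note this is the only place the hypothesis $R\geq r$ is used to guarantee $\BB^n(0;R-r)$ is a genuine (nonempty) ball.

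For the reverse inclusion $\subseteq$, I would take $y$ in the intersection and bound $\|y\|$. The key observation is that the tightest constraint comes from the center placed diametrically opposite to $y$: for $y\neq 0$, set $\hat a=-r\,y/\|y\|$, which satisfies $\|\hat a\|=r$ and is therefore an admissible center. Since $y$ and $-\hat a=r\,y/\|y\|$ are positive multiples of the same vector, $\|y-\hat a\|=\|y\|+\|\hat a\|=\|y\|+r$. Membership of $y$ in $\BB^n(\hat a;R)$ then forces $\|y\|+r\leq R$, i.e.\ $\|y\|\leq R-r$, as required; the case $y=0$ is immediate because $R-r\geq 0$.

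The computation itself is routine, so there is no genuine obstacle; the only point demanding care is the choice of direction for the extremal center in the second inclusion. Selecting $\hat a$ aligned with $y$ rather than anti-aligned would yield only the far weaker bound $\|y\|\leq R+r$, so it is essential to place the center pointing away from $y$ in order to maximize $\|y-\hat a\|$ over the admissible centers and thereby extract the sharp radius $R-r$.
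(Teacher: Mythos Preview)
Your proof is correct and follows essentially the same approach as the paper: both establish the two inclusions via the triangle inequality, and for the nontrivial direction both select the extremal center $\hat a$ diametrically opposite to the test point so that $\|y-\hat a\|=\|y\|+r$. The only cosmetic differences are that you first translate to $a=0$ and argue the second inclusion directly, whereas the paper keeps $a$ general and phrases that inclusion as a contrapositive.
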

\begin{proof}
Let $v\in\BB^{n}(a;R-r)$. Then, $\|v-a\|\leq R-r$. Thus, for every $\hat{a}\in\BB^{n}(a;r)$, $\|v-\hat{a}\|\leq\|v-a\|+\|a-\hat{a}\|\leq R-r+r\leq R$. Hence, $v\in\BB^n(\hat{a};R)$.

Now, let $v\not\in\BB^{n}(a;R-r)$. Hence, $\|v-a\|>R-r$. Now, choose $\hat{a}=a+r(a-v)/\|a-v\|$. Clearly, $\hat{a}\in\BB^{n}(a;r)$. On the other hand, $\|v-\hat{a}\|=\|v-a-r(a-v)/\|a-v\|\|=\|(a-v)\|(1+r/\|a-v\|)=r+\|a-v\|>R$. Thus, $v\not\in\BB^n(\hat{a};R)$.
\end{proof}

Finally, we can combine the results of the above lemmas to obtain an approximation for $\cV^\cG_x$.

\vskip 5pt

\begin{theorem}
\label{approvx}
Let $\cU$, $L_f$, $L_G$, $f(0)$, and $G(0)$ be as above. Let $x\in\RR^n$ satisfy the inequality $(L_f+L_G)\|x\|\leq\|G(0)^{-1}\|^{-1}$. Define $$\overline\cV^\cG_x=\BB^n\left(f(0);\|G(0)^{-1}\|^{-1}-L_f\|x\|-L_G\|x\|\right)\textrm{.}$$ Then, $\overline\cV^\cG_x\subseteq\cV^\cG_x\textrm{.}$
\end{theorem}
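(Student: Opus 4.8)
The plan is to rewrite $\cV^\cG_x$ using the explicit description \eqref{dcon2} and then peel off the two intersections one at a time with Lemmas~\ref{lem2} and~\ref{lem3}, finishing with a perturbation estimate on the smallest singular value. By \eqref{dcon2} the admissible pairs $(\hat f(x),\hat G(x))$ range over the \emph{Cartesian product} $\BB^n(f(0);L_f\|x\|)\times\BB^{n\times n}(G(0);L_G\|x\|)$, so $\cV^\cG_x=\bigcap \hat a+\hat B\cU$ decouples into an iterated intersection over $\hat a$ and $\hat B$ independently. I would intersect over $\hat B$ first for each fixed $\hat a$: applying Lemma~\ref{lem2} with center $a=\hat a$, base matrix $B=G(0)$, and radius $r=L_G\|x\|$ gives $\bigcap_{\hat B}\hat a+\hat B\cU\supseteq\BB^n(\hat a;\varepsilon)$, where $\varepsilon=\min_{\hat B\in\BB^{n\times n}(G(0);L_G\|x\|)}\|\hat B^{-1}\|^{-1}$ is the same for every $\hat a$. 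Intersecting these inclusions over $\hat a$ yields $\cV^\cG_x\supseteq\bigcap_{\hat a\in\BB^n(f(0);L_f\|x\|)}\BB^n(\hat a;\varepsilon)$, and Lemma~\ref{lem3} (with $R=\varepsilon$, $r=L_f\|x\|$) collapses this to $\BB^n(f(0);\varepsilon-L_f\|x\|)$.

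It remains to bound $\varepsilon$ from below, and this is the one genuinely new estimate in the argument. Recalling from the proof of Lemma~\ref{lem1} that $\|M^{-1}\|^{-1}=\sigma_{\min}(M)$ is the smallest singular value, I would use the variational characterization $\sigma_{\min}(M)=\min_{\|v\|=1}\|Mv\|$ together with the triangle inequality: for any unit vector $v$ and any $\hat B\in\BB^{n\times n}(G(0);L_G\|x\|)$, $\|\hat Bv\|\geq\|G(0)v\|-\|(\hat B-G(0))v\|\geq\sigma_{\min}(G(0))-L_G\|x\|$. Minimizing over $v$ and then over $\hat B$ gives $\varepsilon\geq\|G(0)^{-1}\|^{-1}-L_G\|x\|$. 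Consequently $\varepsilon-L_f\|x\|\geq\|G(0)^{-1}\|^{-1}-L_f\|x\|-L_G\|x\|$, so the ball $\BB^n(f(0);\varepsilon-L_f\|x\|)$ contains $\overline\cV^\cG_x$, completing the chain $\overline\cV^\cG_x\subseteq\BB^n(f(0);\varepsilon-L_f\|x\|)\subseteq\cV^\cG_x$.

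Finally, I would verify that the hypotheses of the invoked lemmas hold under the standing assumption $(L_f+L_G)\|x\|\leq\|G(0)^{-1}\|^{-1}$. This inequality rearranges to $\|G(0)^{-1}\|^{-1}-L_G\|x\|\geq L_f\|x\|$, which, via the bound on $\varepsilon$ just derived, secures the condition $R=\varepsilon\geq r=L_f\|x\|$ needed for Lemma~\ref{lem3}. For Lemma~\ref{lem2} I need $L_G\|x\|<\|G(0)^{-1}\|^{-1}$; when the target radius $\|G(0)^{-1}\|^{-1}-L_f\|x\|-L_G\|x\|$ is strictly positive this holds automatically, and the boundary case of a degenerate ball $\overline\cV^\cG_x=\{f(0)\}$ can be disposed of by the direct check that $\|\hat B^{-1}(f(0)-\hat a)\|\leq 1$ for all admissible $\hat a,\hat B$, whence $f(0)\in\cV^\cG_x$. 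The main obstacle is therefore not the lemma-chaining but the singular-value perturbation bound $\sigma_{\min}(\hat B)\geq\sigma_{\min}(G(0))-\|\hat B-G(0)\|$; I expect this to be the crux, although the variational argument above keeps it elementary and avoids appealing to the full Eckart--Young--Mirsky machinery.
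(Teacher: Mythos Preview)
Your proof is correct and follows essentially the same route as the paper's: decompose $\cV^\cG_x$ as an iterated intersection via \eqref{dcon2}, apply Lemma~\ref{lem2} to the inner intersection over $\hat B$, bound $\varepsilon$ below by a singular-value perturbation estimate, and finish with Lemma~\ref{lem3}. The only substantive difference is that the paper invokes Weyl's inequality for the bound $\sigma_{\min}(\hat B)\geq\sigma_{\min}(G(0))-\|\hat B-G(0)\|$, whereas you derive it directly from the variational formula $\sigma_{\min}(M)=\min_{\|v\|=1}\|Mv\|$ and the triangle inequality; your argument is slightly more self-contained, and you also check the lemmas' hypotheses and the degenerate-radius boundary case more explicitly than the paper does.
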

\begin{proof}
As noted in \eqref{add1},
$$\cV^\cG_x=\bigcap_{\hat{a}\in\BB^n(f(0);L_f\|x\|)}\left(\bigcap_{\hat{B}\in\BB^{n\times n}(G(0);L_G\|x\|)}\hat{a}+\hat{B}\cU\right)\textrm{.}$$ Then, by Lemma~\ref{lem2}, $$\cV^\cG_x\supseteq \bigcap_{\hat{a}\in\BB^n(f(0);L_f\|x\|)}\BB^{n}\left(\hat{a};\min_{\hat{B}\in\BB^{n\times n}(G(0);L_G\|x\|)}\|\hat{B}^{-1}\|^{-1}\right)\textrm{.}$$

By Weyl's inequality for singular values \citep{Ste90} and a characterization of $\|\hat{B}^{-1}\|^{-1}$ as an appropriate singular value of matrix $\hat{B}$ \citep{BenGre03}, $\|G(0)^{-1}\|^{-1}-r\leq \min_{\hat{B}\in\BB^{n\times n}(G(0);r)}\|\hat{B}^{-1}\|^{-1}$ for any $r\geq 0$. Hence,
$$\cV^\cG_x\supseteq \bigcap_{\hat{a}\in\BB^n(f(0);L_f\|x\|)}\BB^{n}(\hat{a};\|G(0)^{-1}\|^{-1}-L_G\|x\|)\textrm{.}$$
The claim of the theorem now holds by Lemma~\ref{lem3}, since we assume $L_f\|x\|\leq\|G(0)^{-1}\|^{-1}-L_G\|x\|$.
\end{proof}

Theorem~\ref{approvx} is the central result of the current section. Assuming $L_f+L_G>0$, for each $x$ in a ball around $x_0=0$ of radius $1/((L_f+L_G)\|G(0)^{-1}\|)$ it generates a nonempty set $\overline\cV^\cG_x$ of guaranteed velocities, i.e., velocities $v$ for which there certainly exist a control input $u\in\cU$ such that $f(x)+G(x)u=v$. The case of $L_f+L_G=0$ is uninteresting as it results in dynamics \eqref{consys} being entirely known from $f(0)$ and $G(0)$, and the true reachable set can thus be computed using standard methods described at the end of subsequent section.

Theorem~\ref{approvx} is illustrated by Figure~\ref{ilthm}, with the underapproximation $\overline\cV^\cG_x$ added to the illustration of $\cV^\cG_x$ from Figure~\ref{intell}. While one might expect that consecutive underapproximations in the three lemmas preceding Theorem~\ref{approvx} would result in an exceedingly bad approximation $\overline\cV^\cG_x$, the following proposition shows that $\overline\cV^\cG_x$ is indeed the best approximation of $\cV^\cG_x$ by a ball.

\begin{figure}[ht]
\centering
\includegraphics[width=0.8\textwidth]{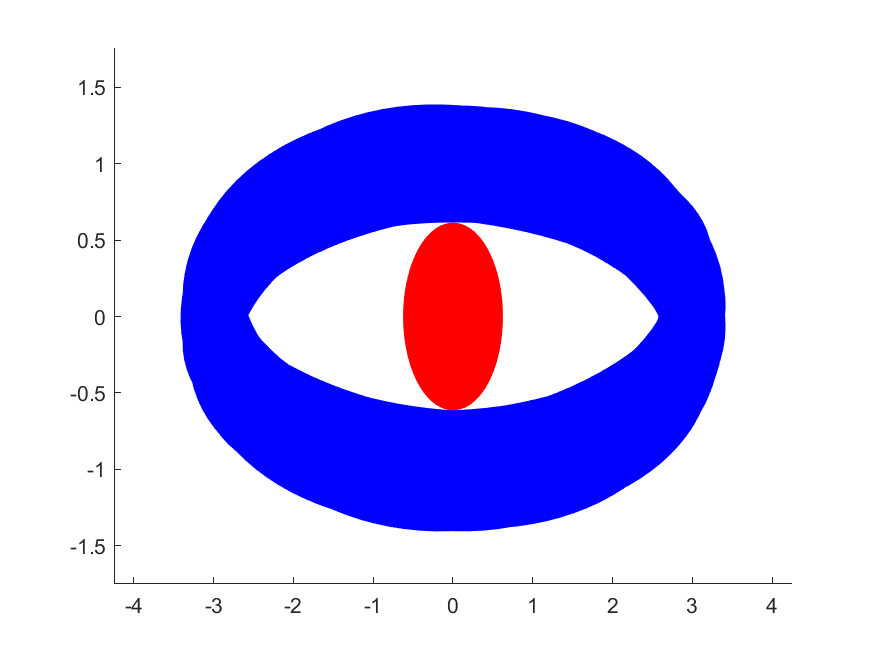}
\caption{Set $\cV^\cG_x$, with $x=(1,0)$, $L_G=0.3$, $f(0)=[0\textrm{ }0]^T$, and $G(0)=\mathrm{diag}(3,1)$, is given as the white area in center. Its underapproximation $\overline\cV^\cG_x=\BB^n(0;0.6)$ is drawn in red.}
\label{ilthm}
\end{figure} 

\vskip 5pt

\begin{proposition}
\label{bestball}
Let $\cV^\cG_x$ and $\overline\cV^\cG_x$ be as above. Then, $\overline\cV^\cG_x$ is a ball of maximal radius that is contained in $\cV^\cG_x$.
\end{proposition}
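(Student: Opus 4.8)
By Theorem~\ref{approvx} we already know that $\overline\cV^\cG_x=\BB^n(f(0);\rho)\subseteq\cV^\cG_x$, where $\rho=\|G(0)^{-1}\|^{-1}-L_f\|x\|-L_G\|x\|$. So the content of the proposition is maximality, i.e., that no ball of radius strictly greater than $\rho$ is contained in $\cV^\cG_x$. The plan is to exhibit two particular members of the family defining $\cV^\cG_x$ whose intersection is already confined to a slab of half-width $\rho$; since $\cV^\cG_x$ is contained in that slab, every ball it contains must have radius at most $\rho$.

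First I would fix a singular value decomposition of $G(0)$, writing $\sigma_{\min}:=\|G(0)^{-1}\|^{-1}$ for its smallest singular value and letting $u_n,w_n$ be the associated left and right singular vectors, so that the shortest principal semi-axis of the ellipsoid $G(0)\cU$ points along $u_n$ and has length $\sigma_{\min}$ (the fact already used in Lemma~\ref{lem1}(ii)). The key object is the rank-one perturbation $\hat{B}=G(0)-L_G\|x\|\,u_nw_n^T$. Because $u_n,w_n$ are unit vectors, the matrix $u_nw_n^T$ has $2$-norm $1$, so $\|\hat{B}-G(0)\|=L_G\|x\|$ and hence $\hat{B}\in\BB^{n\times n}(G(0);L_G\|x\|)$. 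This update lowers only the smallest singular value, from $\sigma_{\min}$ to $\sigma_{\min}-L_G\|x\|$, which is nonnegative by the hypothesis $(L_f+L_G)\|x\|\leq\sigma_{\min}$; it leaves the left singular vector $u_n$ of the shortest axis unchanged. Concretely, $u_n^T\hat{B}u=(\sigma_{\min}-L_G\|x\|)\,w_n^Tu$, so the support of $\hat{B}\cU$ in the direction $u_n$ is exactly $\pm(\sigma_{\min}-L_G\|x\|)$.

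Next I would choose the two extreme centers $\hat{a}_\pm=f(0)\pm L_f\|x\|\,u_n$, both lying in $\BB^n(f(0);L_f\|x\|)$, and pair each with $\hat{B}$. By \eqref{dcon2} each pair $(\hat{a}_\pm,\hat{B})$ is realized by some $(\hat f,\hat G)\in\cD_{con}$, so $\cV^\cG_x\subseteq(\hat{a}_+ +\hat{B}\cU)\cap(\hat{a}_- +\hat{B}\cU)$. Projecting onto $u_n$ and using the support computation above, any $v$ in this intersection satisfies $|u_n^T(v-f(0))|\leq(\sigma_{\min}-L_G\|x\|)-L_f\|x\|=\rho$. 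Thus $\cV^\cG_x$ lies in the slab $\{v:|u_n^T(v-f(0))|\leq\rho\}$ of half-width $\rho$, and any ball $\BB^n(c;\delta)\subseteq\cV^\cG_x$ must fit inside this slab, forcing $\delta\leq\rho$. Combined with the containment from Theorem~\ref{approvx}, this shows $\overline\cV^\cG_x$ is a ball of maximal radius.

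The step I expect to be the main obstacle is the singular-value bookkeeping in the second paragraph: verifying that the rank-one update simultaneously stays inside the $2$-norm ball $\BB^{n\times n}(G(0);L_G\|x\|)$ and realizes the smallest singular value exactly at its Weyl lower bound $\sigma_{\min}-L_G\|x\|$, with the singular direction fixed at $u_n$. This is precisely the achievability (converse) of the Weyl inequality invoked inside the proof of Theorem~\ref{approvx}, and it is what makes the slab as thin as $\rho$ rather than merely bounding it from one side; the symmetric center choice $\hat{a}_\pm$ along the same direction $u_n$ then removes the remaining $L_f\|x\|$, in the same spirit as Lemma~\ref{lem3}.
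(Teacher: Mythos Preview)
Your proof is correct and follows essentially the same approach as the paper: exhibit two admissible pairs $(\hat a_\pm,\hat B)$ whose ellipsoids squeeze $\cV^\cG_x$ into a slab of half-width $\rho$ along the minimal singular direction, forcing any inscribed ball to have radius at most $\rho$. The paper does this via a ``without loss of generality'' coordinate change aligning the axes of $G(0)\cU$ with the coordinate axes and then asserts that the required $\hat B$ exists; your explicit rank-one perturbation $\hat B=G(0)-L_G\|x\|\,u_nw_n^T$ is precisely one concrete realization of that $\hat B$, and your support computation $u_n^T\hat B u=(\sigma_{\min}-L_G\|x\|)w_n^Tu$ makes the slab bound transparent without changing coordinates.
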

\begin{proof}
The proof follows the same steps as the proof of part (ii) of Lemma~\ref{lem1}, which proves the proposition in the special case of $L_f=0$ and $L_G=0$. As the detailed proof is technically inelegant, if straightforward, we provide its outline. 

As in part (ii) of Lemma~\ref{lem1}, we may assume without loss of generality that $f(0)+G(0)\cU$ is an ellipsoid with principal semi-axes parallel to coordinate axes $e_i$ and its shortest principal semi-axes given by $\pm\|G(0)^{-1}\|^{-1} e_n$. Now consider two ellipsoids centered around $ L_f\|x\|e_n$ and $-L_f\|x\|e_n$, respectively, such that their principal semi-axes are parallel with the coordinate axes and such their shortest principal semi-axes are given by $(\|G(0)^{-1}\|^{-1}-L_G\|x\|)e_n$. It can be shown that these ellipsoids can be written as $\hat{a}+\hat{B}\cU$, where $\hat{a}=\pm L_f\|x\|e_n$ and $\hat{B}\in\BB^{n\times n} (G(0);L_G\|x\|)$. By \eqref{dcon2}, there thus exist two control systems $\cM(\hat{f},\hat{G})$ consistent with assumed knowledge such that the constructed ellipsoids are their available velocity sets at $x$.

For any $x'\in\RR^n$ and any $\delta>\|G(0)^{-1}\|^{-1}-L_G\|x\|-L_f\|x\|$, set $\BB^n(x';\delta)$ will contain at least one point $y=(y_1,\ldots,y_n)$ with $|y_n|\geq \delta$. On the other hand, any such point is in at most one of the two above ellipsoids $\hat{a}+\hat{B}\cU$, and is thus not in their intersection, and consequently not in $\cV^\cG_x$.
\end{proof}

While $\overline\cV^\cG_x$ is the best approximation of $\cV^\cG_x$ by a ball, such an underapproximation still potentially discards a large part of the intersection. In the case $x=0$, the quality of the approximation, i.e., the volume of the set being discarded, depends on the magnitude of $\|G(0)^{-1}\|^{-1}$ (i.e., the length of shortest principal semi-axes of $f(0)+G(0)\cU$) relative to the lengths of other principal semi-axes of $\cV_0=\cV^\cG_0$. By considering solely the ratio of the length of the longest principal semi-axes and the shortest principal semi-axes, we obtain that the volume being discarded roughly depends on the \textit{condition number} of matrix $G(0)$ \citep{BenGre03}. Such a number, representing the ratio of the largest and smallest singular value of a matrix, is commonly used to determine ``distance'' from matrix singularity. In other words, the closer $G(0)$ is to being singular, i.e., the closer the control system is to being underactuated at $x_0=0$, the worse the provided underapproximation will be. 

\begin{remark}
If system $\cM(f,G)$ satisfied $\mathrm{rank}(G(0))=k<n$, i.e., Assumption~\ref{ass} did not hold, all our work could be performed by considering the smallest non-zero singular value $\sigma_k$ of $G(0)$ instead of $\|G(0)^{-1}\|^{-1}$; the set $f(0)+G(0)\cU$ would be a $k$-dimensional ellipsoid with shortest principal semi-axes of length $\sigma_k$, and we would obtain a $k$-dimensional underapproximation for $\cV^\cG_x$. In that case, the quality of the underapproximation would depend on the ratio between the largest and smallest \emph{nonzero} singular value of matrix $G(0)$.
\end{remark}

In the remainder of the text, we assume that $L_f+L_G>0$ for both technical and motivational reasons: the case $L_f=L_G=0$ results in the system dynamics being known immediately from our assumed knowledge. Its reachable set can thus be computed using the methods described in the subsequent section.

We now proceed to exploit the geometrically simple form of $\overline{\cV}^\cG_x$ to obtain a control system with known dynamics whose reachable set is an underapproximation of the GRS $R^\cG(T,0)$.

\section{Reachable Set}
\label{reachse}
Using the result of Theorem~\ref{approvx}, we can underapproximate the set of trajectories to the differential inclusion \eqref{incsys}, i.e., the controlled dynamics \eqref{consys}, by the trajectories of the differential inclusion
\begin{equation}
\label{apsys}
\begin{split}
\dot{x}\in\overline\cV^\cG_x\textrm{,} \\
x(0)=x_0\textrm{.}
\end{split}
\end{equation}
Set $\overline\cV^\cG_x$ is only defined for $x$ such that $\|G(0)^{-1}\|^{-1}-L_G\|x\|-L_f\|x\|\geq 0$, and we only care about the reachability of \eqref{apsys} within the ball $\BB_{know}=\BB^n(0;1/((L_f+L_G)\|G(0)^{-1}\|))$. The interpretation of set $\BB_{know}$ is that any state $x\not\in\BB_{know}$ is so far away from $x_0=0$ that the local dynamics at $x_0$ do not provide any information about local dynamics at $x$.

In order to exploit previous work on computing reachable sets, we continuously extend differential inclusion \eqref{apsys} to the entire $\RR^n$ by defining $\overline\cV^\cG_x=\{f(0)\}$ outside of the ball $\BB^n(0;1/((L_f+L_G)\|G(0)^{-1}\|))$. We emphasize that such an extension is purely technical, i.e., $f(0)$ is \textit{not} a guaranteed velocity at states $x\not\in\BB_{know}$.

Let $\overline{R}(T,x_0)$ be the reachable set of \eqref{apsys} at time $T$, i.e., the set of all states achieved by trajectories $\phi:[0,+\infty)\to\RR^n$ that satisfy $\phi(0)=x_0$ and \eqref{apsys} for all $t\leq T$. It can be shown that any solution of \eqref{apsys} connecting two points in $\BB_{know}$ needs to entirely lie within $\BB_{know}$. Thus, underapproximation $\overline{\overline{R}}(T,x_0)$ of the guaranteed reachable set $R^\cG(T,x_0)$ is given by \begin{equation}
\label{olol}
\overline{\overline{R}}(T,x_0)=\overline{R}(T,x_0)\cap\BB_{know}\subseteq R^\cG(T,x_0)\textrm{.}
\end{equation}

The question that remains to be considered is, thus, determining the set $\overline{R}(T,x_0)$. The problem of computing reachable sets of differential inclusions is generally difficult \citep{Asaetal01, QuiVel02}. However, we will show that the fact that each set $\overline\cV^\cG_x$ is a ball allows us to describe the set $\overline{R}(T,x_0)$ as a solution to a partial differential equation given in closed form. 

Going back to inclusion \eqref{apsys}, we note that --- analogously to our conversion of \eqref{consys} into inclusion \eqref{incsys} --- inclusion \eqref{apsys} can be interpreted as, now \textit{entirely known}, ordinary differential equation
\begin{equation}
\label{newconsys}
\begin{split}
\dot{x}=a+ & g(\|x\|)u\textrm{,} \\
x(0) & =0\textrm{,}
\end{split}
\end{equation}
with $a=f(0)$, $u\in\cU=\BB^n(0;1)$, and
where $g:[0,+\infty)\to\RR$ is a ramp function given by $g(s)=
\|G(0)^{-1}\|^{-1}-(L_G+L_f)s$ if $s\leq \|G(0)^{-1}\|^{-1}/(L_G+L_f)$ and $g(s)=0$ otherwise.

The set $\overline{R}(T,x_0)$ is now the reachable set of the system with dynamics \eqref{newconsys}, in the sense discussed in Section \ref{probst}. As mentioned in the preliminary sections, there exist multiple approaches to approximating or computing reachable sets for nonlinear control systems \citep{Che17,Vin80,ScoBar13,Baietal13}; we follow the Hamilton-Jacobi approach described by \cite{Che17} and references therein. 

The approach in \cite{Che17} relies on defining any sufficiently smooth function $l:\RR^n\to\RR$ with $\{0\}=\{x~|~l(x)\leq 0\}$. Applied to our setting and switching between backward reachable sets described by \cite{Che17} and forward reachability that is of interest to us, the reachable set $\overline{R}(T,x_0)$ is then given by $$\overline{R}(T,x_0)=\{x\in\RR^n~|~V(T,x)\leq 0\}\textrm{.}$$ Function $V(t,x)$ is the viscosity solution \citep{CraLio83} of the \textit{Hamilton-Jacobi partial differential equation}
\begin{equation}
\label{pde}
\begin{split}
& V_t(t,x)+H(x,V_x(t,x))=0\textrm{ for all } t\in[-T,0], x\in\RR^n\textrm{,} \\
& V(0,x)=l(x) \textrm{ for all } x\in\RR^n\textrm{,}
\end{split}
\end{equation}
where $H(x,\lambda)=\min_{u\in\cU}\lambda^T(-a+g(\|x\|)u)$.
Due to the simplicity of the dynamics \eqref{newconsys}, the Hamiltonian $H(x,\lambda)$ can be written in closed form. The following theorem presents such a result.

\vskip 5pt
\begin{theorem}
\label{thm2}
Let $l:\RR^n\to\RR$ with $\{x~|~l(x)\leq 0\}=\{0\}$ and $T>0$. Then, $$\overline{\overline{R}}(T,x_0)=\{x\in\BB_{know}~|~V(T,x)\leq 0\}\textrm{,}$$ where
$V$ is the viscosity solution of 
\begin{equation}
\label{pde2}
\begin{split}
& V_t(t,x)-V_x(t,x)^Ta-\|V_x(t,x)\|g(\|x\|)=0 \textrm{ for all } t\in[-T,0], x\in\RR^n\textrm{,} \\
& V(0,x)=l(x) \textrm{ for all } x\in\RR^n\textrm{.}
\end{split}
\end{equation}
\end{theorem}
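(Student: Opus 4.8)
The plan is to take as given the Hamilton-Jacobi characterization recorded just before the statement --- namely that $\overline{R}(T,x_0)=\{x\in\RR^n~|~V(T,x)\leq 0\}$, with $V$ the viscosity solution of \eqref{pde} for the Hamiltonian $H(x,\lambda)=\min_{u\in\cU}\lambda^T(-a+g(\|x\|)u)$ --- and to reduce the theorem to two tasks: evaluating this Hamiltonian in closed form, and intersecting the resulting level set with $\BB_{know}$ by way of \eqref{olol}. The only real computation is the Hamiltonian; the rest is bookkeeping.

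For the Hamiltonian, I would split off the $u$-independent term and use that $g(\|x\|)\geq 0$ (the ramp $g$ is nonnegative by construction) to preserve the sign under the minimization:
$$H(x,\lambda)=-\lambda^T a+g(\|x\|)\min_{\|u\|\leq 1}\lambda^T u\textrm{.}$$
By Cauchy-Schwarz, $\min_{\|u\|\leq 1}\lambda^T u=-\|\lambda\|$, attained at $u=-\lambda/\|\lambda\|$ for $\lambda\neq 0$ and trivially for $\lambda=0$, so $H(x,\lambda)=-\lambda^T a-g(\|x\|)\|\lambda\|$. Substituting $\lambda=V_x(t,x)$ turns \eqref{pde} into exactly \eqref{pde2}, while the set identity $\overline{R}(T,x_0)=\{x~|~V(T,x)\leq 0\}$ is unchanged. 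Finally, invoking \eqref{olol}, which gives $\overline{\overline{R}}(T,x_0)=\overline{R}(T,x_0)\cap\BB_{know}$, yields $\overline{\overline{R}}(T,x_0)=\{x\in\BB_{know}~|~V(T,x)\leq 0\}$, as claimed.

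The part requiring care is not the algebra but verifying that the cited viscosity-solution machinery applies. I would check that $x\mapsto g(\|x\|)$ is globally Lipschitz --- it is, as the composition of the piecewise-linear (hence Lipschitz) ramp $g$ with the norm --- so that \eqref{newconsys} has a Lipschitz right-hand side and \cite{Che17} furnishes a unique viscosity solution for the smooth terminal data $l$. I would also observe that the $-a$ inside $H$ and the interval $t\in[-T,0]$ already encode the passage from the backward reachability of \cite{Che17} to the forward reachability we want, with the symmetry of $\cU=\BB^n(0;1)$ under $u\mapsto-u$ making the time-reversed input set coincide with the original; consequently no further sign analysis is needed once \eqref{pde} is accepted as the starting point.
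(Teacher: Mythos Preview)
Your proposal is correct and follows essentially the same approach as the paper: compute the Hamiltonian in closed form by pulling out the $-\lambda^Ta$ term and minimizing $\lambda^Tu$ over the unit ball, then invoke \eqref{olol} to intersect with $\BB_{know}$. Your added remarks on Lipschitz continuity of $g(\|\cdot\|)$ and the backward/forward passage are reasonable hygiene but go slightly beyond what the paper's own one-line proof records.
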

\begin{proof}
The result follows simply from \eqref{olol} and \eqref{pde}, noticing that $H(x,\lambda)=\min_{u\in\cU}\lambda^T(-a+g(\|x\|)u)=-\lambda^Ta+g(\|x\|)\min_{u\in\cU}\lambda^Tu=-\lambda^Ta-g(\|x\|)\lambda^T\lambda/\|\lambda\|=-\lambda^Ta-g(\|x\|)\|\lambda\|$. 
\end{proof}

Solving \eqref{pde2} thus yields the desired underapproximation $\overline{\overline{R}}(T,x_0)$ of the set $R^\cG(T,x_0)$. We remark that the above method for computing $\overline{\overline{R}}(T,x_0)$ is not the only possible one; for instance, as $x\mapsto\overline\cV^\cG_x$ easily satisfies the one-sided Lipschitz and upper semicontinuity conditions of \cite{Donetal03}, it is possible to use the exponential formula derived therein to obtain an expression for $\overline{\overline{R}}(T,x_0)$ as a limit of sets, with a guaranteed order of convergence. 

Having found $\overline{\overline{R}}(T,x_0)$, let us briefly return to the question of determining a control signal that drives the system satisfying the original dynamics \eqref{consys} from $x_0$ to an element of this set, first raised in Section \ref{probst}. Such a control signal clearly depends on system dynamics. However, the results of \cite{Che17} provide an expression for a control signal $\overline{u}$ on \eqref{newconsys} that drives the system from $x_0$ to the desired objective. By taking $v=a+g(\|x\|)\overline{u}$, such a signal yields a function of desired velocity vectors $v(t)$ that will drive the system to the desired objective. As discussed in the proof of Lemma~\ref{lem2}, $G(x)$ is invertible for all $x\in\BB_{know}$. Thus, if $f(x(t))$ and $G(x(t))$ were known, the appropriate control input $u(r)$ for \eqref{consys} would be given by $$u(t)=\left(G
\left(x(t)\right)\right)^{-1}\left(v(t)-f\left(x(t)\right)\right)\textrm{.}$$ Combining $v$ with the algorithm of \cite{Ornetal17,Ornetal19} --- technical assumptions notwithstanding --- to determine $f(x(t))$ and $G(x(t))$ at time $t$ with arbitrarily small error, we can thus obtain a control input that takes the system from $x_0$ to any desired state in $\overline{\overline{R}}(T,x_0)$.

We now proceed to illustrate the results obtained in this section and preceding sections by way of numerical examples.

\section{Examples}
\label{exa}

We investigate two scenarios. In the first one, computing the GRS of a 1D system will validate our theory and show that, while our current theoretical results do not establish equality between $\overline{\overline{R}}(T,0)$ and $R^\cG(T,0)$, those two sets may indeed coincide. Thus, determining conditions for their equality, or at least the equality of $R^\cG(T,0)$ and the reachable set of inclusion $\dot{x}\in\cV^\cG_x$, is a meaningful direction for future work.

The second example models the motivational scenario of this paper's narrative. We will model an aircraft with deteriorating actuation capabilities and use our theory --- slightly modified to allow for time-varying dynamics --- to determine the set of states that it is guaranteed to safely reach. 

\subsection{Autoregulation}
\label{exa1}

We will first illustrate the developed results on a simple single-dimensional system motivated by a systems biology narrative. The system state is given by $x\in\RR$, and the input is given by $u\in[-1,1]$. The available knowledge of the system dynamics is that they satisfy \eqref{consys}, with $f(0)=0$, $G(0)=1/2$, $L_f=0.1$, and $L_G=0.7$. The true --- but unknown --- dynamics of the system are given by \begin{equation}
\label{hill}
\begin{split}
\dot{x}(t) & =\frac{(x(t)+1)^2}{1+(x(t)+1)^2}u\textrm{,} \\
x(0) & =0\textrm{.}
\end{split}
\end{equation}

Dynamics \eqref{hill} are a slight modification of dynamics describing the phenomenon of gene autoregulation --- the ability of a gene product to determine its own rate of production by binding to those elements of the gene that regulate its production \citep{Alo06b,Jabetal92}. However, in the context of this paper, our interest in equation \eqref{hill} is purely mathematical.

Based on the above available knowledge of the system dynamics, we seek to determine the underapproximation $\overline{\overline{R}}(T,0)$ from \eqref{olol}, and compare it to the GRS $R^\cG(T,0)$. We will also compare it to the true reachability set $R(T,0)$; however, the latter set would be impossible to determine solely from available knowledge. 

With true dynamics given in equation \eqref{hill}, we note that bounds $L_f=0.1$ and $L_G=0.7$ are loose; for instance, the maximal rate of change of $f$ equals $0$ (i.e., $f(x)=0$ for all $x$). Consequently, the guaranteed reachable set is smaller than the one that would have been obtained with tighter bounds $L_f$ and $L_G$.

In order to determine $\overline{\overline{R}}(T,0)$, we follow Theorem~\ref{approvx} and the work in Section \ref{reachse}. Namely, from Theorem~\ref{approvx}, by plugging in relevant values of $f(0)$, $G(0)$, $L_f$ and $L_G$, we obtain $\overline{\cV}^\cG_x=\BB^1(0;0.5-0.8\|x\|)=[-0.5+0.8|x|,0.5-0.8|x|]$ for $|x|\leq 5/8$. By \eqref{olol} and \eqref{newconsys}, the set $\overline{\overline{R}}(T,0)$ is thus given by intersecting $[-5/8,5/8]$ with the reachable set of \begin{equation}
\label{ex1}
\begin{split}
&\dot{x} =(0.5-0.8|x|)u\textrm{,} \\
& x(0) =0,
\end{split}
\end{equation} 
with $u\in[-1,1]$.

The solution to \eqref{ex1} can be found analytically for any $u$; it can consequently be easily shown that the reachable set $\overline{R}(T,0)$ of \eqref{ex1} equals $$\overline{R}(T,0)=\left[\frac{5}{8}(e^{-0.8T}-1),\frac{5}{8}(1-e^{-0.8T})]\right]\textrm{.}$$ Hence, $\overline{R}(T,0)\subseteq [-5/8,5/8]$ and thus $\overline{\overline{R}}(T,0)=\overline{R}(T,0)$.

By the construction of $\overline{\overline{R}}(T,0)$, the GRS satisfies $R^\cG(T,0)\supseteq\overline{\overline{R}}(T,0)$. We will now show $R^\cG(T,0)\subseteq\overline{\overline{R}}(T,0)$, i.e., $R^\cG(T,0)=\overline{\overline{R}}(T,0)$.

Consider first the dynamics $\dot{x}=0.1|x|+(0.5-0.7|x|)u$, $x(0)=0$. These dynamics are consistent with the available knowledge about the system, i.e., if $\hat{f}(x)=0.1|x|$ and $\hat{G}(x)=0.5-0.7|x|$, then $(\hat{f},\hat{G})\in\cD_{con}$. By considering the dynamics when $x\leq 0$, the reachable set $R^{\hat f,\hat G}(T,0)$ can be shown to equal $[\frac{5}{8}(e^{-0.8T}-1),\alpha_T]$ for some $\alpha_T\geq 0$.

We can analyze dynamics $\dot{x}=-0.1|x|+(0.5-0.7|x|)u$ analogously. By looking at the case of $x\geq 0$, we obtain that the reachable set of a system with these dynamics equals $[\beta_T,\frac{5}{8}(1-e^{-0.8T})]$ for some $\beta_T\leq 0$.

Hence, by \eqref{grs}, $R^\cG(T,0)\subseteq\overline{\overline{R}}(T,0)$, and thus $R^\cG(T,0)=\overline{\overline{R}}(T,0)$. In other words, our underapproximation is not an approximation at all and exactly equals the GRS. While establishing further theoretical results on equality of the two sets is a topic for future work, the above fact encourages us to believe that $\overline{\overline{R}}(T,0)$ --- or, more likely, the reachable set of inclusion \eqref{vdifinc} --- may indeed equal the GRS.

Finally, we note that the true reachable set $R(T,0)$ of dynamics \eqref{hill} equals $$R(T,0)=\left[\frac{\sqrt{T^2+4}-T-2}{2},\frac{\sqrt{T^2+4}+T-2}{2}\right]\textrm{.}$$ While such a set is impossible to find only from the available knowledge of $f(0)$, $G(0)$, $L_f$, and $L_g$, it is indeed not difficult --- if algebraically onerous --- to verify that $R^\cG(T,0)\subseteq R(T,0)$ for all $T\geq 0$. Additionally, the relative difference between $R^\cG(T,0)$ and $R(T,0)$ vanishes as $T\to 0$: formally, if $l(\cI)$ represents the length of interval $\cI$, then $$\lim_{T\to 0}\frac{l(R^\cG(T,0))}{l(R(T,0))}=\lim_{T\to 0}\frac{5(1-e^{-0.8T})/4}{T}=1\textrm{.}$$ This feature is in line with our intuition that, the closer the system state is to $x_0=0$, the better we can predict its dynamics. We now proceed to a slightly more involved example.

\subsection{Aircraft Diversion}
\label{exa2}

In this section, we will consider a rudimentary model of an aircraft exposed to unexpected actuator deterioration and environmental effects on dynamics (e.g., wind), and determine a set of states (``diversion airports'') that are guaranteed to be reachable by the aircraft.

We assume that the aircraft dynamics are known to satisfy the following equation: 
\begin{equation}
\label{dynac}
\begin{split}
& \dot{x}=f(t)+G(t)u\textrm{,} \\
& x(0) =0\textrm{,} \\
\end{split}
\end{equation}
where $x\in\RR^2$ and $u\in\cU=\BB^2(0;1)$. Function $f$ models the time-dependent drift caused by the environment, while $G$ models the actuator deterioration. We will use $G(t)=1-t/10$ for $t\leq 10$ and $G(t)=0$ for $t>10$. In other words, the vehicle's actuators deteriorate linearly until they entirely stop working at time $t=10$. We choose $f(t)=[0.1(\cos(t)+1)\textrm{ }0]^T$ to model the effect of a non-zero drift on the GRS --- the chosen $f(t)$ represents the wind flowing in a constant direction with varying strength. We emphasize that those functions, apart from the values of $f(0)$ and $G(0)$, are considered unknown when determining the GRS.

While it is clear that model \eqref{dynac} does not accurately represent the dynamics of any existing aircraft and the described environmental dynamics are simplistic, versions of the underlying single integrator model $\dot{x}=u$ have been extensively used in planning, e.g., by \cite{Bervan91,OhAhn14,Niketal16}.

We note that dynamics \eqref{dynac} do not formally fall within the class of dynamics given by \eqref{consys}; however, we can bring them into that class by the standard method of appending a variable $t$ to the state vector $x=(x_1,x_2)\in\RR^2$ as a state $x_3$ with $\dot{x}_3=1$, $x_3(0)=0$. Such a method has been used in similar circumstances, e.g., by \cite{Ornetal17}. While such modified dynamics still slightly differ from the assumptions of this paper --- namely, the system has three states, but only two inputs, functions $f$ and $G$ are known to depend on only one state, and the dynamics of one of the states are fully known --- the theory of previous sections can be directly adapted to the new circumstances. We can thus proceed with dynamics \eqref{dynac}. Our interest is in determining the set of all states that can eventually be reached, i.e., the \textit{guaranteed eventual reachability set} $\cup_{T\geq 0} R^\cG(T,0)$. We assume that the following knowledge is available: $f(0)=[0.2\textrm{ }0]^T$, $G(0)=1$, $L_f=0.1$, and $L_G=1/5$. As in the previous example, such knowledge is pessimistic: while the bound on the rate of change of drift is correct, the maximal deterioration rate of the actuators is assumed to be twice as large as the true rate.

By adapting Theorem~\ref{approvx} to the framework of \eqref{dynac}, we obtain a subset of the guaranteed velocity set at time $t$ equaling $\overline{\cV}^\cG_t=\BB^2((0.2,0);1-0.3t)$, with $t\leq 10/3$. At time $t\geq 10/3$ the prior information does not allow us to know anything about the effect of actuators on the system.

Analogously to the work of Section~\ref{reachse}, from $\overline{\cV}^\cG_t=\BB^2((0.2,0);1-0.3t)$ we obtain differential equation 
\begin{equation}
\label{dynac2}
\dot{x}=a+(1-0.3t)u\textrm{,}
\end{equation} where $a=[0.2\textrm{ }0]^T$ with a reachable set denoted by $\overline{\overline{R}}(T,0)$. We are interested in determining the set $\overline{\overline{R}}(0)$ defined by $$\overline{\overline{R}}(0)=\bigcup_{T\in [0,10/3]} \overline{\overline{R}}(T,0)\subseteq \bigcup_{T\in [0,10/3]}R^\cG(T,0)\textrm{.}$$

By employing the substitution $z(t)=x-at$, equation \eqref{dynac2} becomes $\dot{z}=(1-0.3t)u$. Its reachable set at time $T$ can be easily computed to equal $\BB^2(0;T-3T^2/20)$. Thus, $\overline{\overline{R}}(T,0)=\BB^2(aT;T-3T^2/20)$. Approximation $\overline{\overline{R}}(0)$ of the set of guaranteed eventual reachability is compared with true reachable sets $\cup_{T\in[0,10/3]}{R}(T,0)$, $\cup_{T\in[0,10]}{R}(T,0)$, and $\cup_{T\geq 0}{R}(T,0)$ in Figure~\ref{illdynac}; the last three sets can be computed from \eqref{dynac} by making the substitution $z=x-[0.1(t+\sin(t))\textrm{ }0]^T$ and integrating both sides of the resulting equation.

\begin{figure}[ht]
\centering
\includegraphics[width=0.8\textwidth]{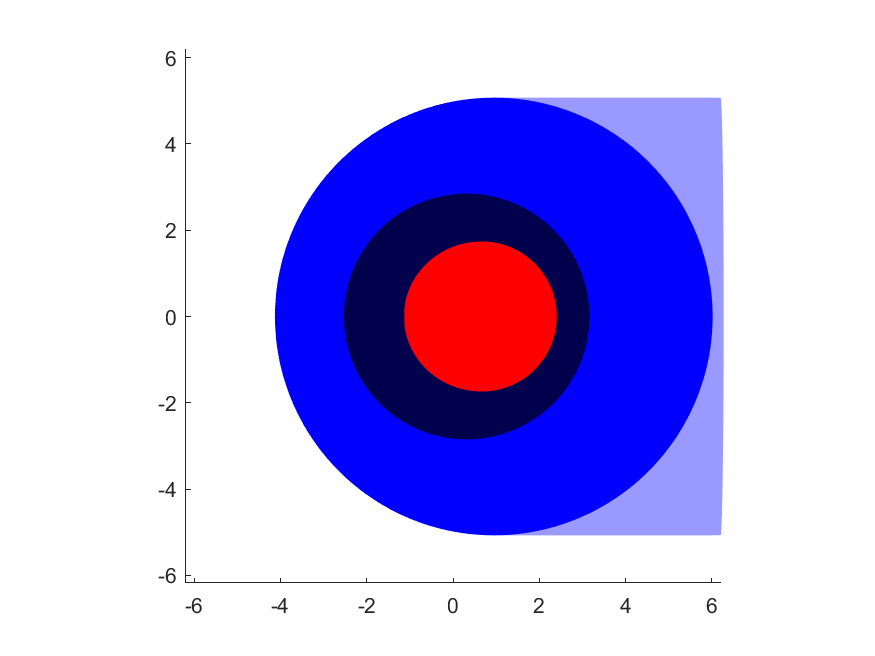}
\caption{Set $\overline{\overline{R}}(0)=\cup_{T\in [0,10/3]} \overline{\overline{R}}(T,0)$ is drawn in red. The sets of true reachable states, $\cup_{T\in[0,10/3]}{R}(T,0)$, $\cup_{T\in[0,10]}{R}(T,0)$, and $\cup_{T\geq 0}{R}(T,0)$ are drawn in increasingly light shades of blue. We note that set $\cup_{T\geq 0}{R}(T,0)$ is unbounded as the vehicle can continue ``gliding'' even after losing all actuation capabilities at time $t=10$.}
\label{illdynac}
\end{figure} 

As shown in Figure~\ref{illdynac}, our results indeed provide an underapproximation of the set of states that the aircraft can reach. While the true reachable set is larger than $\overline{\overline{R}}(0)$, such a property naturally follows from the lack of available accurate knowledge of true dynamics; only the dynamics at time $t=0$ are known, and the rate of change of dynamics is assumed to be larger than the true rate of change.

\section{Conclusions and Future Work}
\label{conc}

Motivated by the problem of choosing an appropriate desired state for a control system with unknown dynamics, this paper establishes preliminary results on estimating the GRS --- a set of states that are guaranteed to be reachable by such a system, given our current knowledge of the system. The mathematical framework of the paper follows the work of \cite{Ornetal17,Ornetal19}; our paper exploits information on local system dynamics that can be collected using an algorithm proposed therein to obtain an underapproximation of the GRS.

The results of this paper represent an initial effort in estimating the GRS. Notably, while we show that the produced set $\overline{\overline{R}}(T,x_0)$ is indeed its subset, we provide only a preliminary discussion of the quality of such an underapproximation. 

The difference between the GRS and $\overline{\overline{R}}(T,x_0)$ depends on the quality of two intermediate approximations: (i) the approximation of the GRS by the reachable set of the differential inclusion $\dot{x}\in\cV^\cG_x$, where $\cV^\cG_x$ are the velocities guaranteed to be available at state $x$, and (ii) the approximation of $\cV^\cG_x$ by a ball $\overline\cV^\cG_x$. In (i), the current paper only shows that the latter set is a subset of the former. However, the numerical results of Section \ref{exa1}, along with the intuition described in Section \ref{gvel}, strongly imply that the two sets may be equal, potentially under some additional conditions. Proving such a claim would significantly improve the theoretical underpinning of the estimation method provided by this paper.

In (ii), the current paper shows that $\overline\cV^\cG_x$ is a ball with maximal radius of all the balls contained in $\cV^\cG_x$. Nonetheless, as discussed at the end of Section \ref{gvel}, $\overline\cV^\cG_x$ may have a significantly smaller volume than $\cV^\cG_x$, thus resulting in an underapproximation of the GRS of significantly lower volume than the true set. Approximating $\cV^\cG_x$ by a geometrical object that more closely fits the complex shape of $\cV^\cG_x$ would yield a better approximation of the GRS. A possible candidate, potentially simple enough to enable computational work, is a general ellipsoid considered for approximation of related sets by \cite{KurVar06b}.

Finally, let us move from the problem of computing the GRS, \textit{given} the assumed system knowledge, to the problem of obtaining a GRS that is closer to the true reachable set of a control system. In that vein, a possible area for theoretical improvement of the results in the paper is in considering a different class of available prior knowledge of system dynamics. While we currently consider and exploit only local dynamics at a single point, the algorithm proposed in \cite{Ornetal17,Ornetal19} can produce local dynamics at arbitrarily many points on a single trajectory. Possibly combined with additional prior knowledge about system dynamics (e.g., bounds on higher partial derivatives of $f$ and $G$), exploiting such information will result in larger sets of guaranteed velocities compared to current work, and thus in guaranteed reachability sets that are closer underapproximations of the true reachable sets. The primary obstacle to successfully improving the estimates using such additional information, even if it may be naturally available, is computational. Namely, with a large number of constraints describing all the information available about the system dynamics, set $\{(\hat{f}(x),\hat{G}(x))~|~ (\hat{f},\hat{G})\in\cD_{con}\}$, crucial in approximating $\cV^\cG_x$, may be geometrically more complex than the one described in this paper.

\section*{Acknowledgments}
This work was supported by an Early Stage Innovations grant from NASA's Space Technology Research Grants Program, grant no.\ 80NSSC19K0209. The author wishes to thank Ufuk Topcu and Franck Djeumou for discussions on related topics.

\bibliographystyle{plainnat}
\bibliography{refs}
\end{document}